\renewcommand{\subset}{\subseteq}
\renewcommand{\epsilon}{\varepsilon}
\newcommand{\st}{\colon\,}
\DeclareMathOperator{\Sat}{Sat}
\DeclareMathOperator{\sat}{sat}
\DeclareMathOperator{\satlim}{satlim}
\DeclareMathOperator{\wt}{wt}
\newtheorem{proposition}{Proposition}
\newtheorem{conjecture}[proposition]{Conjecture}
\newtheorem{theorem}[proposition]{Theorem}
\newtheorem{lemma}[proposition]{Lemma}
\newtheorem{corollary}[proposition]{Corollary}
\theoremstyle{definition}
\newtheorem{definition}[proposition]{Definition}
\theoremstyle{remark}
\newcommand{\sizeof}[1]{\left\lvert{#1}\right\rvert}
\newcommand{\reals}{\mathbb{R}}
\tikzstyle{vertex}=[inner sep = 0pt, minimum width=4pt, fill=black, shape=circle]
\newcommand{\gpoint}[2]{\node[style=vertex, label=#1:$#2$]}
\newcommand{\apoint}[1]{\gpoint{above}{#1}}
\title[A lower bound on the saturation number]{A lower bound on the saturation number, and graphs
  for which it is sharp}
\author{Alex Cameron \and Gregory J.~Puleo}
\begin{document}
\begin{abstract}
  Let $H$ be a fixed graph. We say that a graph $G$ is $H$-saturated
  if it has no subgraph isomorphic to $H$, but the addition of any
  edge to $G$ results in an $H$-subgraph.  The \emph{saturation
    number} $\sat(H,n)$ is the minimum number of edges in an
  $H$-saturated graph on $n$ vertices. K\'{a}szonyi and Tuza, in 1986,
  gave a general upper bound on the saturation number of a graph $H$,
  but a nontrivial lower bound has remained elusive. In this paper we
  give a general lower bound on $\sat(H,n)$ and prove that it is
  asymptotically sharp (up to an additive constant) on a large class
  of graphs.  This class includes all threshold graphs and many graphs
  for which the saturation number was previously determined
  exactly. Our work thus gives an asymptotic common generalization of
  several earlier results. The class also includes disjoint unions of
  cliques, allowing us to address an open problem of Faudree, Ferrara,
  Gould, and Jacobson.
\end{abstract}
\maketitle
\section{Introduction}

Given a fixed forbidden graph $H$, what is the minimum number of edges
that any graph $G$ on $n$ vertices can have such that $G$ contains no
copy of $H$, but the addition of any single edge to $G$ results in a
copy of $H$? This question is a variation of the well-known forbidden
subgraph problem in extremal graph theory, which asks for the maximum
number of edges in an $H$-free graph on $n$ vertices. Asking for
  the \emph{minimum} number of edges instead (and tailoring the
  definition so that this is a sensible question) yields the notion of
  the \emph{saturation number} of a graph $H$, first defined by
  Erd\H{o}s, Hajnal, and Moon~\cite{EHM}, albeit with slightly
  different terminology.

  \begin{definition}
    Let $H$ be a graph. For any graph $G$, we say that $G$ is
    \emph{$H$-free} if it contains no subgraph isomorphic to $H$. We
    say that $G$ is \emph{$H$-saturated} if it is $H$-free and for any
    $xy \in \overline{E(G)}$, the graph $G+xy$ contains a subgraph
    isomorphic to $H$. For $n \geq \sizeof{V(H)}$, let
    $\Sat(H,n)$ denote the set of all $H$-saturated graphs on $n$
    vertices, and let the \emph{saturation number} of $H$
    be \[\sat(H,n) = \min_{G \in \Sat(H,n)} {|E(G)|}.\] In the event
    that $\Sat(H,n) = \emptyset$, we adopt the convention that
    $\sat(H,n)=\infty$. Note that this will only happen if $H$ has no
    edges.
  \end{definition}

  In their paper introducing the concept, Erd\H{o}s, Hajnal, and
  Moon~\cite{EHM} determined $\sat(H,n)$ in the case where $H$ is a
  complete graph.  Since then, the saturation numbers have been
  determined for various classes of graphs. A nice survey on these
  results and more was written by Faudree, Faudree, and Schmitt
  \cite{survey}.

The best known general upper bound on $\sat(H,n)$ was given by
K\'{a}szonyi and Tuza \cite{KT} and later slightly improved by Faudree
and Gould~\cite{FG13}. However, as mentioned in \cite{survey} and in
\cite{FFGJ09}, there is no known nontrivial general lower bound on
this function. In this paper we give such a bound, and determine a
class of graphs for which this bound is asymptotically sharp: for such
graphs, we can prove that $\sat(H,n) = \alpha_H n + O(1)$, where
$\alpha_H$ and the $O(1)$ term depend on only $H$. (We remark that it
is not known, in general, that the limit
$\lim_{n \to \infty}\frac{\sat(H,n)}{n}$ even exists, even though it is known~\cite{KT} that $\sat(H,n)$ is always
bounded by a linear function of $n$; the existence of
this limit was stated as a conjecture by Tuza~\cite{Tuza88}.)

This class of graphs includes all threshold graphs as well as some
non-threshold graphs. In particular, many previously-studied classes
of graphs fall into this class, including cliques~\cite{EHM},
stars~\cite{KT}, generalized books~\cite{CFG08}, disjoint unions of
cliques~\cite{FFGJ09}, generalized friendship graphs~\cite{FFGJ09},
and several of the ``nearly complete'' graphs of~\cite{FG13}. Our
result can be considered as an asymptotic common generalization of
these previous results: at the cost of no longer determining the
\emph{exact} saturation number as in the previous results, we obtain a
simple unified proof that gives the saturation number up to an
additive constant number of edges.

The rest of the paper is organized as follows. In
Section~\ref{sec:weight} we state and prove our general lower bound on
the saturation number, and prove an upper bound on the saturation
number of the graph $H'$ obtained from a graph $H$ by adding a
dominating vertex. In Section~\ref{sec:sat-sharp}, we define the
\emph{sat-sharp} graphs to be the graphs whose saturation numbers are
asymptotically equal to the lower bound of Section~\ref{sec:weight},
and prove that this class of graphs is closed under adding
isolated vertices and dominating vertices. In
Section~\ref{sec:threshold} we discuss threshold graphs, which are
contained within the class of sat-sharp graphs and encompass several graphs
whose saturation numbers were previously determined. Finally, in
Section~\ref{sec:disjoint-cliques}, we prove that any graph consisting
of a disjoint union of cliques is sat-sharp, and discuss the
implications of this.

\section{A weight function and some general bounds}\label{sec:weight}
In this section, we will define a weight function for a general graph
$H$, and prove that it gives a lower bound on the saturation number
$\sat(H,n)$. We will also prove a general bound relating the
saturation number of $H$ to the saturation number of the graph
$H'$ obtained from $H$ by adding a dominating vertex.
\begin{definition}
  For a vertex $x$ in a graph $G$, let $N_G(x)$ and $N_G[x]$ denote the
  \emph{open} and \emph{closed neighborhoods} of $x$ respectively:
  \begin{align*}
    N_G(x)&=\{y \in V(G)\st xy \in E(G)\}, \\
    N_G[x]&=N_G(x) \cup \{x\}.
  \end{align*}
  Let $d_G(x) = \sizeof{N_G(x)}$ denote the degree of $x$,
  and for a vertex set $S$, let $d_{G,S}(x)$ denote the number of
  neighbors of $x$ in the set $S$:
  \[d_{G,S}(x) = \sizeof{N(x) \cap S}.\]
  When the graph $G$ is clear from context, we omit it from our
    notation and simply write $N(v)$, $d(v)$, or $d_S(v)$ as
    appropriate.
\end{definition}
\begin{definition}
  Let $uv$ be an edge in a graph with $d(u) \leq d(v)$. Define the
  \emph{weight} $\wt(uv)$ of the edge $uv$ by
  \[ \wt(uv) = 2\sizeof{N(u) \cap N(v)} + \sizeof{N(v) - N(u)}. \]
  Define the weight of the graph $H$ by
  \[ \wt(H) = \min_{uv \in E(H)} \wt(uv). \]
  If $E(H)=\emptyset$, we define $\wt(H)=\infty$.
\end{definition}

\begin{lemma}\label{lem:wtlower}
  For every graph $H$, there exists a constant $c'_H$ such that
  \[ \sat(H, n) \geq \frac{\wt(H)-1}{2}n - c'_H. \]
\end{lemma}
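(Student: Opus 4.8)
The plan is to bound $\sat(H,n)$ from below by controlling the degree sum $\sum_v d(v) = 2\sizeof{E(G)}$ of an arbitrary $H$-saturated graph $G$ on $n$ vertices. Write $w = \wt(H)$. The engine of the argument is a single per-vertex inequality: if $x$ is a vertex of \emph{minimum} degree $\delta$ in $G$, then I claim every non-neighbor $y$ of $x$ satisfies
\[ d(y) + \sizeof{N(x) \cap N(y)} \ge w - 1. \]
I would first establish this, and then sum it over all non-neighbors of $x$.

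To prove the per-vertex inequality, fix a non-neighbor $y$ of $x$. Since $G$ is $H$-free but $G + xy$ is not, there is an embedding $\phi$ of $H$ into $G + xy$ whose image uses the edge $xy$; say $xy$ plays the role of $uv \in E(H)$, where I label the endpoints so that $d_H(u) \le d_H(v)$. Let $a = \sizeof{N_H(u) \cap N_H(v)}$. The key observation is the identity $\wt(uv) = d_H(v) + a$, together with two facts about $\phi$: the $a$ common neighbors of $u$ and $v$ in $H$ map to genuine common neighbors of $x$ and $y$ in $G$ (none of those edges is the added edge $xy$), so $\sizeof{N(x) \cap N(y)} \ge a$; and whichever of $x,y$ plays the role of $v$ has at least $d_H(v) - 1$ genuine $G$-neighbors, namely its copy-neighbors minus the one reached through $xy$. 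If $y$ itself plays the role of $v$, these two facts combine to give $d(y) + \sizeof{N(x) \cap N(y)} \ge (d_H(v)-1) + a = \wt(uv) - 1 \ge w - 1$, as desired.

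The main obstacle is that I do not get to choose which of $x$ and $y$ plays the higher-degree role $v$; the embedding might instead put $x$ in that role, in which case the degree fact only constrains $x$, not $y$. This is exactly where the minimality of $\delta = d(x)$ is essential. When $x$ plays the role of $v$, the degree fact gives $d(x) \ge d_H(v) - 1$; but $d(x) = \delta$ is the minimum degree, so $d(y) \ge \delta \ge d_H(v) - 1$ as well, and combined with $\sizeof{N(x)\cap N(y)} \ge a$ this again yields $d(y) + \sizeof{N(x)\cap N(y)} \ge (d_H(v) - 1) + a = \wt(uv) - 1 \ge w - 1$. Thus the inequality holds in both cases, and choosing $x$ to have minimum degree is what makes the argument go through.

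To finish, I would sum the inequality over the set $\overline{N[x]}$ of non-neighbors of $x$, of which there are $n - 1 - \delta$. The term $\sum_{y} d(y)$ counts every edge incident to $\overline{N[x]}$ with multiplicity, while $\sum_y \sizeof{N(x) \cap N(y)}$ counts exactly the edges between $\overline{N[x]}$ and $N(x)$; since $x$ has no neighbors in $\overline{N[x]}$, the combined left-hand side equals $2\,e(\overline{N[x]}) + 2\,e(\overline{N[x]}, N(x))$, where these two families of edges of $G$ are disjoint and hence total at most $2\sizeof{E(G)}$. This gives $\sizeof{E(G)} \ge \tfrac{(w-1)(n-1-\delta)}{2}$. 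A short case analysis on $\delta$ then removes the dependence on $\delta$: if $\delta \le w-2$ the error term $\tfrac{(w-1)(1+\delta)}{2}$ is at most $\tfrac{(w-1)^2}{2}$, while if $\delta \ge w - 1$ the trivial bound $\sizeof{E(G)} \ge \tfrac{\delta n}{2} \ge \tfrac{(w-1)n}{2}$ already suffices. Either way $\sat(H,n) \ge \tfrac{w-1}{2}\, n - \tfrac{(w-1)^2}{2}$, so $c'_H = \tfrac{(\wt(H)-1)^2}{2}$ works.
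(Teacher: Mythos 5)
Your proposal is correct and takes essentially the same route as the paper's proof: a minimum-degree vertex $x$, the embedding forced by saturation at each non-neighbor $y$, and the per-vertex inequality $d(y)+\sizeof{N(x)\cap N(y)}\ge \wt(H)-1$, which is identical to the paper's $2d_{G,B}(y)+d_{G,\bar B}(y)\ge \wt(H)-1$ since $\sizeof{N(x)\cap N(y)}=d_{G,B}(y)$ for $y\notin N[x]$. The only differences are cosmetic: the paper also counts the contribution of $x$ itself in the sum and thereby gets the slightly smaller constant $\frac{\wt(H)^2-4\wt(H)+5}{2}$ in place of your $\frac{(\wt(H)-1)^2}{2}$, which is immaterial for the statement.
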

\begin{proof}
  First observe $\wt(H) \geq 1$ for all $H$ and that the claim is
    trivial when $\wt(H) = 1$, so (as $\wt(H)$ is an integer) we may
    assume that $\wt(H) \geq 2$.  Let $G$ be an $H$-saturated graph,
  let $x^*$ be a vertex of minimum degree in $G$, and let
  $B = N_G(x^*)$, so that $\sizeof{B} = d_G(x^*)$. Observe that if
  $d_G(x^*) \geq \wt(H)-1$, then the degree-sum formula immediately
  gives $\sizeof{E(G)} \geq \frac{\wt(H)-1}{2}n$, so we may assume
  that $d_G(x^*) < \wt(H) - 1$. As both of these quantities are
  integers, we have $d_G(x^*) \leq \wt(H) - 2$.

  Consider any vertex $y \in V(G) - N[x^*]$.  By hypothesis, the graph
  $G+x^*y$ contains a copy of $H$.  Let $\phi : V(H) \to V(G+x^*y)$ be
  an embedding of $H$ into $G+x^*y$. Since $G$ is $H$-free, the new
  edge $x^*y$ must be the image of some edge $uv \in E(H)$. We may
  take our notation so that $d_H(u) \leq d_H(v)$.  Let
  $b = \sizeof{N_H(u) \cap N_H(v)}$ and let $a = d_H(v) - b$,
  so that $\wt(uv) = a + 2b$.

  We first claim that $d_G(y) \geq a+b-1$; this requires considering
  two cases, depending on whether $y = \phi(u)$ or $y = \phi(v)$.
  If $y = \phi(u)$, then
  \[d_G(y) \geq \delta(G) = d_G(x^*) \geq d_H(v) - 1 = a+b-1.\]
  Similarly, if $y = \phi(v)$, then
  $d_G(y) \geq d_H(v) -1 \geq a+b-1$. This establishes the claim.

  Now, observe that regardless of whether $y = \phi(u)$ or
  $y = \phi(v)$, we have
  \[\phi(N_H(u) \cap N_H(v)) \subset N_G(x^*) = B.\] So in
  $G$, the vertex $y$ has $b$ guaranteed neighbors in $B$, together
  with at least $a-1$ additional edges which may go to $B$ or may go
  to $\bar{B} - x^*$, where $\bar{B} = V(G) - B$. Therefore,
  \[2d_{G,B}(y) + d_{G,\bar{B}}(y) \geq 2b + a - 1 = \wt(uv)-1 \geq \wt(H)-1\]
  for all $y \in \bar{B} - x^*$.
  
  Now, note that $\sum_{x \in B}d_G(x) \geq \sum_{y \in \bar{B}} d_{G,B}(y)$. So it follows that
  \begin{align*}
    \sizeof{E(G)} &= \frac{1}{2}\left(\sum_{x \in B} d_G(x)+\sum_{y \in \bar{B}} d_G(y) \right)\\
                  &\geq \frac{1}{2}\left(\sum_{y \in \bar{B}} d_{G,B}(y) +\sum_{y \in \bar{B}} d_{G,B}(y) \right)\\
                  &= \frac{1}{2}\sum_{y \in \bar{B}} \left(2d_{G,B}(y)+d_{G,\bar{B}}(y)\right)\\
                  &\geq \frac{2d_G(x^*)+\left(\wt(H)-1\right)\sizeof{\bar{B} - x^*}}{2}\\    
                  &= \frac{2d_G(x^*)+\left(\wt(H)-1\right)\left(n-1-d_G(x^*)\right)}{2}\\
                  &= \frac{\wt(H)-1}{2}n - c'_H,
  \end{align*}
  where \[c'_H = \frac{d_G(x^*)(\wt(H) - 3)+(\wt(H) - 1)}{2}.\] Since we
  assume $0 \leq d_G(x^*) \leq \wt(H)-2$, the value $c'_H$, considered
  as a formal function of the quantity $d_G(x^*)$, is
  maximized at $d_G(x^*) = \wt(H)-2$ whenever $\wt(H) \geq 2$ (the
  case $\wt(H) = 2$, which would imply that this formal function has a negative derivative
    in $d_G(x^*)$, implies that $d_G(x^*)=0$). Therefore,
  \[ \sat(H, n) \geq \frac{\wt(H)-1}{2}n -
    \frac{\wt(H)^2-4\wt(H)+5}{2} \] for $\wt(H) \geq 2$.
\end{proof}
\begin{figure}
  \centering
  \begin{tikzpicture}
    \begin{scope}[xshift=-2cm]
      \apoint{} (u) at (-.67cm, 0cm) {};
      \apoint{} (v) at (.67cm, 0cm) {};
      \apoint{} (w) at (0cm, 1cm) {};
      \apoint{} (z1) at (-.5cm, 1.5cm) {};
      \apoint{} (z2) at (.5cm, 1.5cm) {};
      \draw (w) -- (v) -- (u) -- (w) -- (z1);
      \draw (w) -- (z2);
      \node[anchor=north] at (0cm, -1em) {$H$};
    \end{scope}
    \begin{scope}[xshift=2cm]
      \apoint{} (u) at (-.67cm, 0cm) {};
      \apoint{} (v) at (.67cm, 0cm) {};
      \apoint{} (w) at (0cm, 1cm) {};
      \apoint{} (z1) at (-.5cm, 1.5cm) {};
      \apoint{} (z2) at (.5cm, 1.5cm) {};
      \apoint{v^*} (x) at (0cm, 2cm) {};
      \draw (w) -- (v) -- (u) -- (w) -- (z1);
      \draw (w) -- (z2);
      \draw (x) -- (z1); \draw (x) -- (z2); \draw (x) -- (w);
      \draw (x) .. controls ++(0:1cm) and ++(45:1cm) .. (v);
      \draw (x) .. controls ++(180:1cm) and ++(135:1cm) .. (u);      
      \node[anchor=north] at (0cm, -1em) {$H'$};
    \end{scope}    
  \end{tikzpicture}
  \caption{Forming $H'$ by adding a dominating vertex $v^*$ to the
    graph $H$.}
  \label{fig:add-dom}
\end{figure}
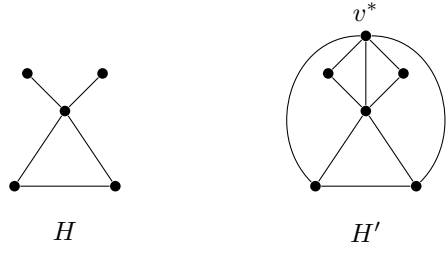
A central goal of this paper is to explore the effect on the
saturation number of the operation of adding a dominating vertex to
$H$, as shown in Figure~\ref{fig:add-dom}. It turns out that
  this gives a general upper bound on the saturation number of the new
  graph in terms of the saturation number of $H$; we wish to know when
  this upper bound is sharp. We believe that this upper bound is in
  the same general spirit as Lemma 9 of K\'aszonyi--Tuza~\cite{KT}.
\begin{lemma}\label{lem:adddom}
  If $H'$ is obtained from $H$ by adding a dominating vertex $v^*$,
  then for all $n \geq \sizeof{V(H')}$, we have
  $\sat(H', n) \leq (n-1) + \sat(H, n-1)$.
\end{lemma}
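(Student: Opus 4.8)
The plan is to establish the upper bound by an explicit construction: take an extremal $H$-saturated graph and glue on a dominating vertex. Concretely, I would first dispose of the degenerate case in which $H$ has no edges, where $\sat(H,n-1)=\infty$ and the inequality is vacuous. Otherwise, let $G$ be an $H$-saturated graph on $n-1$ vertices with $\sizeof{E(G)} = \sat(H,n-1)$ (such a $G$ exists since $n-1 \geq \sizeof{V(H)}$ and $H$ has an edge). Form $G'$ on $n$ vertices by adding a new vertex $w$ adjacent to every vertex of $G$. Then $\sizeof{E(G')} = \sizeof{E(G)} + (n-1) = \sat(H,n-1) + (n-1)$, so it suffices to prove that $G'$ is $H'$-saturated; the edge count then yields the claimed bound.

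The saturation condition is the easy half. Since $w$ is adjacent to everything, every non-edge of $G'$ has both endpoints in $V(G)$, so let $xy$ be such a non-edge. Because $G$ is $H$-saturated, $G+xy$ contains an embedded copy $\phi \colon V(H) \to V(G)$ of $H$. In $G'$ the vertex $w$ is adjacent to all of $\phi(V(H))$, so extending $\phi$ by $\phi(v^*) = w$ produces an embedding of $H'$ into $G'+xy$. Hence adding any missing edge to $G'$ creates a copy of $H'$.

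The crux is verifying that $G'$ is itself $H'$-free, and I expect this to be the main obstacle, because a copy of $H'$ in $G'$ might use $w$ as the image of a vertex of $H$ rather than of the dominating vertex $v^*$. The resolving observation is a structural fact about $H'$: \emph{for every vertex $x \in V(H')$, the graph $H'-x$ contains a copy of $H$.} When $x = v^*$ this holds because $H'-v^* = H$; when $x \in V(H)$, the bijection sending $x \mapsto v^*$ and fixing every other vertex embeds $H$ into $H'-x$, since $v^*$ is adjacent to all remaining vertices and can therefore absorb every edge of $H$ incident to $x$. Granting this, suppose for contradiction that $G'$ contained an embedded copy of $H'$. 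Either the copy avoids $w$, in which case it lies entirely in $G$ and gives $H \subseteq H' \subseteq G$; or some vertex $x \in V(H')$ maps to $w$, in which case the remaining vertices of $H'$ embed $H'-x$ into $G$, and since $H'-x$ contains $H$ we again obtain a copy of $H$ in $G$. Both outcomes contradict the $H$-freeness of $G$, so $G'$ is $H'$-free, completing the argument.

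Beyond the structural observation about $H'-x$, every step is routine; the only bookkeeping point to handle carefully is ensuring $\Sat(H,n-1) \neq \emptyset$ so that the extremal $G$ exists, which is exactly where the hypothesis $n \geq \sizeof{V(H')}$ and the assumption that $H$ has an edge are used.
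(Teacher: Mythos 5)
Your proposal is correct and follows essentially the same approach as the paper: add a dominating vertex to a minimum $H$-saturated graph on $n-1$ vertices, verify saturation directly, and handle the $H'$-freeness case where the new vertex plays the role of a vertex of $H$ by swapping that vertex's image to $\phi(v^*)$ — your structural observation that $H'-x$ contains $H$ for every $x$ is just a clean repackaging of the paper's inline re-embedding argument. Your explicit treatment of the edgeless case (where $\sat(H,n-1)=\infty$ makes the bound vacuous) is a minor point of added care, not a different method.
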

\begin{proof}
  It suffices to produce an $H'$-saturated graph with at most the
  indicated number of edges.  Let $G$ be a minimum $H$-saturated graph
  on $n-1$ vertices, and let $G'$ be obtained from $G$ by adding a new
  dominating vertex $x^*$. It is clear that
  $\sizeof{E(G')} = (n-1) + \sat(H, n-1)$; we show that $G'$ is
  $H'$-saturated.

  First we argue that $G'$ is $H'$-free. Suppose to the contrary that
  $\phi : V(H') \to V(G')$ is an embedding of $H'$ into $G'$. If
  $x^* \notin \phi(V(H'))$, then $\phi$ is an embedding of $H'$ into
  $V(G')$.  Hence $G'$ has a copy of $H'$ and thus a copy of $H$,
  contradicting the $H$-saturation of $G$.  If $\phi(v^*) = x^*$, then
  the restriction of $\phi$ to $V(H)$ is an embedding of $H$ into $G$,
  again a contradiction.

  Hence we can assume that $\phi(v^*) \neq x^*$ and there is some
  vertex $w^* \in V(H)$ with $\phi(w^*) = x^*$. Construct a new
  embedding $\phi_0 : V(H) \to V(G)$ by letting
  $\phi_0(w^*) = \phi(v^*)$ and taking $\phi_0(w) = \phi(w)$ for all
  $w \neq w^*$. Since $\phi(v^*)$ dominates the image of $\phi$ in $G$
  (as $x^*$ was a dominating vertex of $H$), we see that $\phi_0$ is
  still a valid embedding. Hence we have again obtained a copy of $H$
  in $G$, a contradiction. We conclude that $G'$ is $H'$-free.

  Finally we argue that adding any missing edge to $G'$ produces a new
  copy of $H'$.  Since $x^*$ is dominating, any missing edge in $G'$ is
  an edge of the form $yz$ where $y,z \in V(G)$. Now $G + yz$
  contains a copy of $H$, since $G$ is $H$-saturated; adding the
  dominating vertex $x^*$ to this copy of $H$ gives a copy of $H'$ in
  $G'+yz$.  
\end{proof}

\section{Sat-sharp graphs}\label{sec:sat-sharp}
For a graph $H$, let
$\satlim(H) = \lim_{n \to \infty} \frac{\sat(H,n)}{n}$, provided that
this limit exists.  Say a graph $H$ is \emph{sat-sharp} if
$\satlim(H) = \frac{\wt(H) - 1}{2}$. Moreover, say that a graph $H$ is
\emph{strongly sat-sharp} if
$\sat(H,n) = \frac{\wt(H) - 1}{2}n + O(1)$. Note that any strongly
sat-sharp graph is also sat-sharp. Also, note that by adopting the
convention that $w(H)=\infty$ when $E(H)=\emptyset$, we can conclude
that any graph with no edges is strongly sat-sharp since
$\sat(H,n)=\infty$ for all $n \geq \sizeof{V(H)}$.

In this section we will show that the classes of sat-sharp graphs and
strongly sat-sharp graphs are each closed under adding isolated and
dominating vertices. To express these results concisely, we write
statements like ``if $H$ is (strongly) sat-sharp, then $H'$ is
(strongly) sat-sharp'' as shorthand for the pair of statements ``if
$H$ is sat-sharp, then $H'$ is sat-sharp; if $H$ is strongly
sat-sharp, then $H'$ is strongly sat-sharp''.

As $K_1$ is strongly sat-sharp, these closure results immediately
imply that all threshold graphs are strongly sat-sharp (as we will
discuss in Section~\ref{sec:threshold}). They also imply that
\emph{any} graph $H$ which can be proven to be (strongly) sat-sharp
gives rise to many (strongly) sat-sharp graphs derived from $H$ by
these operations. In particular, we will prove in
Section~\ref{sec:disjoint-cliques} that a disjoint union of cliques is
strongly sat-sharp, although it is not in general a threshold graph;
this implies that any graph obtained from a disjoint union of cliques
via these operations is also strongly sat-sharp.

\begin{lemma}
  If $H$ is a (strongly) sat-sharp graph, and $H'$ is obtained from $H$ by adding
  isolated vertices, then $H'$ is  (strongly) sat-sharp, and $\satlim(H') = \satlim(H)$.
\end{lemma}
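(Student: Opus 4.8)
The plan is to prove the exact identity $\sat(H', n) = \sat(H, n)$ for all $n \geq \sizeof{V(H')}$; the sat-sharp and strongly sat-sharp conclusions then follow at once. The point is that adding isolated vertices changes neither the edge set nor the weights, so $\wt(H') = \wt(H)$ and the target quantity $\frac{\wt(H')-1}{2}n$ appearing in the two definitions coincides with $\frac{\wt(H)-1}{2}n$.

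To obtain the identity, write $H' = H \cup \overline{K_k}$, where $k = \sizeof{V(H')} - \sizeof{V(H)}$ is the number of added isolated vertices, and first show that for any graph $G$ on $n \geq \sizeof{V(H')}$ vertices, $G$ contains a copy of $H'$ if and only if it contains a copy of $H$. One direction is immediate, since the restriction of any embedding of $H'$ to $V(H)$ is an embedding of $H$. For the converse, given a copy of $H$ occupying $\sizeof{V(H)}$ vertices of $G$, the remaining $n - \sizeof{V(H)} \geq k$ vertices can serve as images of the $k$ isolated vertices of $H'$, which impose no adjacency constraints; this extends the copy of $H$ to a copy of $H'$.

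From this equivalence both parts of the saturation condition transfer directly. A graph $G$ on $n$ vertices is $H'$-free exactly when it is $H$-free, and since adding a non-edge $xy$ keeps the vertex count at $n \geq \sizeof{V(H')}$, the equivalence also applies to $G + xy$, so $G + xy$ contains a copy of $H'$ exactly when it contains a copy of $H$. Hence $G$ is $H'$-saturated if and only if it is $H$-saturated, giving $\Sat(H', n) = \Sat(H, n)$ and thus $\sat(H', n) = \sat(H, n)$. Combined with $\wt(H') = \wt(H)$, this yields $\satlim(H') = \satlim(H)$ whenever the limit exists, and it preserves the $O(1)$ error term in the strongly sat-sharp case.

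The only delicate point is the vertex accounting in the converse direction: producing a copy of $H'$ from a copy of $H$ requires at least $k$ unused vertices, which is guaranteed precisely by the hypothesis $n \geq \sizeof{V(H')}$. I would handle all $k$ isolated vertices simultaneously rather than inducting on a single added vertex, so that this counting stays transparent and the constant in the strongly sat-sharp bound is manifestly unchanged.
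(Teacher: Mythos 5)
Your proposal is correct and is essentially the paper's own argument in expanded form: the paper's proof is exactly the observation that for $n \geq \sizeof{V(H')}$ a graph $G$ is $H'$-saturated if and only if it is $H$-saturated, so $\sat(H',n) = \sat(H,n)$, and you have simply filled in the vertex-counting details justifying that equivalence. The extra care you take with the hypothesis $n \geq \sizeof{V(H')}$ and with $\wt(H') = \wt(H)$ is sound and matches what the paper leaves implicit.
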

\begin{proof}
  For all $n \geq \sizeof{V(H')}$, a graph $G$ is $H'$-saturated if
  and only if it is $H$-saturated, hence $\sat(H', n) = \sat(H, n)$
  for all sufficiently large $n$.
\end{proof}

To handle the operation of adding a dominating vertex, we prove the
following two lemmas, which taken together show that the class of
(strongly) sat-sharp graphs is closed under the operation of
adding a dominating vertex.
\begin{lemma}\label{lem:sat-dom-smallwt}
  Let $H$ be a $k$-vertex  (strongly) sat-sharp graph, and let $H'$
    be obtained from $H$ by adding a dominating vertex $v^*$.
    If $H$ has no isolated vertices, or if $\wt(H) \leq k-2$,
    then $\wt(H') = 2 + \wt(H)$ and $\satlim(H') = 1 + \satlim(H)$.
   Moreover, $H'$ is  also (strongly) sat-sharp.
\end{lemma}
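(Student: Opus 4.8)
The plan is to first compute $\wt(H')$ exactly in terms of $\wt(H)$, and then to sandwich $\sat(H',n)$ between the lower bound of Lemma~\ref{lem:wtlower} and the upper bound of Lemma~\ref{lem:adddom}. Throughout it is convenient to use the identity $\wt(uv) = \sizeof{N(u)\cap N(v)} + d(v)$ (valid when $d(u)\leq d(v)$), which follows from the definition since $\sizeof{N(v)-N(u)} = d(v) - \sizeof{N(u)\cap N(v)}$.

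First I would split the edges of $H'$ into two kinds. For an old edge $uv\in E(H)$, passing from $H$ to $H'$ adds $v^*$ as a common neighbor of $u$ and $v$ and increases $d(v)$ by one, so $\wt_{H'}(uv) = \wt_H(uv) + 2$; hence the minimum of $\wt_{H'}$ over the old edges is exactly $\wt(H)+2$. For a new edge $v^* x$ with $x\in V(H)$, the vertex $x$ has the smaller degree, its common neighborhood with $v^*$ is exactly $N_H(x)$, and $N_{H'}(v^*)-N_{H'}(x) = V(H)-N_H(x)$; this gives $\wt_{H'}(v^* x) = d_H(x) + k$, whose minimum over $x$ is $\delta(H)+k$, where $\delta(H)$ is the minimum degree of $H$. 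Therefore $\wt(H') = \min\{\wt(H)+2,\ \delta(H)+k\}$.

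The crux is to show the new edges never beat the old ones, i.e. that $\wt(H)\leq \delta(H)+k-2$; this is exactly where the two hypotheses enter, and I expect it to be the main obstacle. If $\wt(H)\leq k-2$ the inequality is immediate since $\delta(H)\geq 0$. If instead $H$ has no isolated vertices, then a minimum-degree vertex $x^*$ has a neighbor $y$, and for the edge $x^* y$ one has $\sizeof{N(x^*)\cap N(y)}\leq \delta(H)-1$ (since $y$ cannot be a common neighbor of itself) and $d_H(y)\leq k-1$, so $\wt(H)\leq \wt_H(x^* y)\leq (\delta(H)-1)+(k-1)=\delta(H)+k-2$. Either way $\wt(H')=\wt(H)+2$, and consequently $\tfrac{\wt(H')-1}{2} = \tfrac{\wt(H)-1}{2}+1$.

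Finally I would combine the two general bounds. Lemma~\ref{lem:wtlower} gives $\sat(H',n)\geq \tfrac{\wt(H')-1}{2}n - c'_{H'}$, while Lemma~\ref{lem:adddom} gives $\sat(H',n)\leq (n-1)+\sat(H,n-1)$. For the strongly sat-sharp case, substituting $\sat(H,n-1)=\tfrac{\wt(H)-1}{2}(n-1)+O(1)$ into the upper bound and using $\tfrac{\wt(H)-1}{2}+1=\tfrac{\wt(H')-1}{2}$ shows the upper bound equals $\tfrac{\wt(H')-1}{2}n+O(1)$, matching the lower bound up to a constant, so $H'$ is strongly sat-sharp. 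For the merely sat-sharp case, dividing both bounds by $n$ and letting $n\to\infty$ yields $\liminf \tfrac{\sat(H',n)}{n}\geq \tfrac{\wt(H')-1}{2}$ from the lower bound and $\limsup \tfrac{\sat(H',n)}{n}\leq 1+\satlim(H)=\tfrac{\wt(H')-1}{2}$ from the upper bound, so the limit exists and equals $\tfrac{\wt(H')-1}{2}$. In both cases $\satlim(H')=\tfrac{\wt(H')-1}{2}=1+\satlim(H)$, completing the argument.
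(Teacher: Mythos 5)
Your proof is correct and takes essentially the same route as the paper's: both compute that old edges gain weight $2$ while new edges $v^*x$ have weight $d_H(x)+k$, use the two hypotheses to show the new edges are never strictly lighter, and then sandwich $\sat(H',n)$ between the lower bound of Lemma~\ref{lem:wtlower} and the upper bound of Lemma~\ref{lem:adddom}. Your packaging via $\wt(H')=\min\{\wt(H)+2,\ \delta(H)+k\}$ and the single inequality $\wt(H)\leq \delta(H)+k-2$ (checked only at a minimum-degree vertex) is a mildly cleaner reorganization of the paper's case split on whether the endpoint of the new edge is isolated, but the substance is identical.
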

\begin{lemma}\label{lem:sat-isol-dom-bigwt}
  Let $H$ be a $k$-vertex graph with an isolated vertex $u$,
  and let $H'$ be obtained from $H$ by adding a dominating vertex
  $v^*$.  If $\wt(H) > k-2$, then $H'$ is strongly sat-sharp, with $\wt(H') = k$ and
  $\satlim(H') = \frac{k-1}{2}$.  
\end{lemma}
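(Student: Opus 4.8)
The plan is to pin down $\wt(H')$ exactly and then sandwich $\sat(H',n)$ between the general lower bound of Lemma~\ref{lem:wtlower} and an explicit construction, where the bulk of the work lies. Write $H_0 = H - u$, a graph on $k-1$ vertices whose edges and vertex-neighborhoods coincide with those of $H$ (the isolated vertex $u$ contributes nothing), so that $\wt(H_0) = \wt(H) \geq k-1$.

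First I would compute $\wt(H') = k$ by examining the three types of edges of $H'$. The pendant edge $v^* u$ has $d_{H'}(u) = 1$, $d_{H'}(v^*) = k$, and $N_{H'}(u) \cap N_{H'}(v^*) = \emptyset$, so its weight is $k$. An edge $v^* w$ with $w \in V(H_0)$ satisfies $N_{H'}(w) \cap N_{H'}(v^*) = N_{H_0}(w)$ with $d_{H'}(v^*) = k$ still the larger degree, giving weight $k + d_{H_0}(w) \geq k$. Finally, for an edge $w_1 w_2$ of $H_0$, passing to $H'$ raises both endpoint degrees by one, inserts $v^*$ into the common neighborhood, and leaves the difference $N(w_2) - N(w_1)$ unchanged, so its weight is $\wt_{H_0}(w_1 w_2) + 2 \geq (k-1) + 2 > k$. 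Hence the minimum is attained at the pendant edge and $\wt(H') = k$. This is the sole place the hypothesis $\wt(H) > k-2$ is used: it is exactly what pushes the original edges of $H_0$ strictly above weight $k$, so that the pendant edge is the cheapest and $\wt(H')$ equals $k$ rather than something smaller (this is precisely the regime excluded from Lemma~\ref{lem:sat-dom-smallwt}).

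With $\wt(H') = k$, Lemma~\ref{lem:wtlower} yields $\sat(H',n) \geq \frac{k-1}{2}n - c'_{H'}$. For the matching upper bound I would exhibit an $H'$-saturated graph $G$ with $\frac{k-1}{2}n + O(1)$ edges, namely a disjoint union of cliques: take $\floor{n/k}$ copies of $K_k$ together with one clique on the remaining $r < k$ vertices. Since every component is a clique of size at most $k$ while $H'$ is connected on $k+1$ vertices, $G$ contains no copy of $H'$. The structural fact driving the construction is the embedding $H' \subseteq K_k + \text{(pendant)}$: sending $v^*$ to the clique vertex carrying the pendant, $u$ to the pendant, and $V(H_0)$ arbitrarily onto the remaining clique vertices is a valid embedding because $K_k$ is complete. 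Consequently any missing edge $xy$ of $G$ has its endpoints in distinct cliques, at least one of which is a full $K_k$, so $G + xy$ contains a $K_k$ with a pendant and hence a copy of $H'$. Thus $G$ is $H'$-saturated with $\sizeof{E(G)} = \binom{k}{2}\floor{n/k} + O(1) = \frac{k-1}{2}n + O(1)$.

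Combining the two estimates gives $\sat(H',n) = \frac{k-1}{2}n + O(1)$, so $H'$ is strongly sat-sharp with $\satlim(H') = \frac{k-1}{2} = \frac{\wt(H')-1}{2}$, as claimed. I expect the main obstacle to be the saturation verification rather than the weight computation: one must confirm both that the disjoint cliques are genuinely $H'$-free and that every added edge actually completes a copy of $H'$, which rests entirely on the embedding $H' \subseteq K_k + \text{(pendant)}$ and on checking that the single leftover clique never leaves an added edge without an endpoint in a full $K_k$ (valid as soon as $n \geq k$, which holds since $n \geq k+1$).
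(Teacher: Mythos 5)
Your proposal is correct and follows essentially the same route as the paper: the same case analysis showing the pendant edge $uv^*$ has weight $k$ while all other edges of $H'$ have weight at least $k$ (using integrality of $\wt(H) > k-2$), the lower bound from Lemma~\ref{lem:wtlower}, and the identical upper-bound construction of $\floor{n/k}$ disjoint copies of $K_k$ plus one $K_r$, saturated via the same embedding that sends $u$ to one endpoint of the added edge and $v^*$ to the other. If anything, you are slightly more explicit than the paper in verifying that the edges $v^*w$ for non-isolated $w \in V(H)$ have weight $k + d_H(w) \geq k$, a check the paper leaves implicit by reference to the computation in the proof of Lemma~\ref{lem:sat-dom-smallwt}.
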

Note that Lemma~8 does not actually require the graph $H$ to be
sat-sharp, although that is the main case we are concerned with.  In
the case where $H$ has no edges and so $\wt(H) = \infty$, the
hypothesis of Lemma~\ref{lem:sat-isol-dom-bigwt} applies, yielding
$\satlim(K_{1,k}) = \frac{k-1}{2}$; this is an asymptotic version of
the exact result of K\'aszonyi and Tuza~\cite{KT}.

\begin{proof}[Proof of Lemma~\ref{lem:sat-dom-smallwt}]
  Let $\epsilon(H, n) = \sat(H,n) - \satlim(H)n$, so that $\epsilon(H,n) = o(n)$
    when $H$ is sat-sharp and $\epsilon(H,n) = O(1)$ when $H$ is strongly sat-sharp.
  
  By Lemma~\ref{lem:adddom}, we have
   \[\sat(H', n) \leq (n-1) + \sat(H, n-1) = (\satlim(H)+1)n + \epsilon(H, n-1) - 1 .\]   If we can prove that $\wt(H') \geq \wt(H) + 2$, then
   Lemma~\ref{lem:wtlower} will give
   \[\sat(H', n) \geq \frac{\wt(H')-1}{2}n-c'_{H'} = (\satlim(H)+1)n -
     c'_{H'}.\] In particular, this implies that
   $\satlim(H') = \frac{\wt(H')-1}{2}$ and that
   $\sizeof{\epsilon(H', n)} \leq  \sizeof{\epsilon(H,n)} + \sizeof{c'_H} + 1$, so if
   $H$ is (strongly) sat-sharp, then $H'$ is (strongly) sat-sharp.

  An edge $e \in E(H)$ can be viewed (and its weight computed) either
  as an edge of $H$ or as an edge of $H'$.  We will use $\wt(e)$ and
  $\wt'(e)$ to refer to the weight of such an edge computed in $H$ or
  $H'$, respectively. Observe that if $uw \in E(H)$, then when we pass
  to $H'$, we add $v^*$ as a new element of $N(u) \cap N(w)$ and change 
  nothing else about the sets $N(u) \cap N(w)$ or $N(w) - N(u)$. Hence,
  $\wt'(e) = \wt(e) + 2$ for all $e \in E(H')$.

  The only remaining edges of $H'$ are edges of the form $v^*u$ for
  $u \in V(H)$. We claim that all such edges have weight at most
    $2 + \wt(H)$. If $u$ is isolated in $H$, then we have
    \[ \wt'(uv^*) = 2\sizeof{N(u) \cap N(v^*)} +
      \sizeof{N(v^*) - N(u)} = 0 + k = k \geq 2+\wt(H), \] where the
    last inequality follows from the assumption that $\wt(H) \leq k-2$
    (since we assumed that $u$ is isolated and that $H$ either obeys
    this weight inequality or is isolate-free).

    On the other hand, if $u$ is not isolated in $H$, let $ut$ be
    another edge incident to $u$. Observe that
  \begin{align*}
    \wt'(v^*u) &= 2\sizeof{N(u) \cap N(v^*)} + \sizeof{N(v^*) - N(u)} \\
               &= 2d_H(u) + (k - d_H(u)) \\
               &= d_H(u) + k
  \end{align*}
  and that $\wt(ut) \leq (d_H(u)-1) + (k-1)$ for any edge
  $ut \in E(H)$. It follows that
  $\wt'(v^*u) \geq \wt(ut)+2 = \wt'(ut)$. Hence, an edge of minimum
  weight in $H'$ is found among the edges of $H$, and the smallest
  such weight is $\wt(H) + 2$.
\end{proof}
\begin{proof}[Proof of Lemma~\ref{lem:sat-isol-dom-bigwt}]
  We again write $\wt(e)$ to refer to the weight of an edge $e$
  computed in $H$ and $\wt'(e)$ to refer to the weight of an edge $e$
  when computed in $H'$.

  As previously discussed, we have $\wt'(e) = \wt(e) + 2$ for every edge $e \in E(H)$.
  On the other hand, considering the isolated vertex $u$, we see that $\wt(uv^*) = k$,
  as $\sizeof{N(u) \cap N(v^*)} = 0$ and $\sizeof{N(v^*) - N(u)} = k$.
  
  If $\wt(H)+2 > k$, then this implies $\wt(H') = k$, with the only
  edges of minimum weight being those edges joining $v^*$ with an
  isolated vertex of $H$. This establishes the first claim of the lemma.

  Lemma~\ref{lem:wtlower} now gives the lower bound
 \[\sat(H',n) \geq \frac{k-1}{2}n - c'_{H'}.\] We establish a matching upper bound by constructing an
  $H'$-saturated graph on $n$ vertices, for any $n \geq \sizeof{V(H')}$.

  Let any $n \geq \sizeof{V(H')}$ be given, and write $n = qk + r$,
  where $0 \leq r < k$. Let $G$ be the $n$-vertex graph consisting of $q$ disjoint copies of $K_k$
  and a single copy of $K_r$. Clearly
  \[ \sizeof{E(G)} =\frac{n-r}{k}{k \choose 2} + {r \choose 2} \leq
    \frac{k-1}{2}n.\] So if we can argue that $G$ is
  $H'$-saturated, then we will have $\satlim(H') = \frac{k-1}{2}$, and
  we will have that $H'$ is strongly sat-sharp.

  It is clear that $G$ is $H'$-free, since $H'$ is connected and has
  $k+1$ vertices, while every component of $G$ has at most $k$
  vertices. We claim that adding any edge to $G$ produces a subgraph
  isomorphic to $H$. Let $xy$ be a missing edge in $G$; we may assume
  that $y$ lies in a copy of $K_k$. Let $Q$ be the set of vertices of
  the copy of $K_k$ containing $y$.

  Now observe that we can embed $H'$ into $G+xy$ by any injection $\phi : V(H') \to V(G)$
  that satisfies:
  \begin{itemize}
  \item $\phi(u) = x$, and    
  \item $\phi(v^*) = y$,
  \item $\phi(V(H) - \{u,v^*\}) = Q - y$,
  \end{itemize}
  and with $k-1$ vertices in $Q-y$, there is enough room to complete
  the last part of the embedding.  The key point is that there is no
  edge, in $H'$, from $u$ to any vertex of $H'$ except for $v^*$, and
  all vertices of $H'$ except for $u$ are being embedded into a clique
  of $G$, so any edges they require are present. Thus, $G$ is
  $H'$-saturated, which completes the proof.
\end{proof}
\section{Threshold Graphs}\label{sec:threshold}
A natural class of strongly sat-sharp graphs is the class of \emph{threshold graphs}. A
simple graph $G$ with vertex set $\{v_1, \ldots, v_n\}$ is a
\emph{threshold graph} if there exist weights
$x_1, \ldots, x_n \in \reals$ such that, for all $i \neq j$, we have
$v_iv_j \in E(G)$ if and only if $x_i + x_j \geq 0$. Threshold graphs
were first introduced by Chv\'atal and Hammer~\cite{CH73, CH77},
albeit with a slightly different definition than the one we give here.

Threshold graphs admit many equivalent characterizations. For our
purposes, the following characterization is the most useful one.
\begin{theorem}[Chv\' atal--Hammer~\cite{CH77}; see also~\cite{MP}]
  For a simple graph $G$, the following are equivalent:
  \begin{enumerate}
  \item $G$ is a threshold graph;
  \item $G$ can be obtained from $K_1$ by iteratively adding a new vertex
    which is either an isolated vertex, or dominates all previous vertices;
  \end{enumerate}
\end{theorem}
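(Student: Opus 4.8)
The plan is to prove the two implications separately, each by induction on the number of vertices $n$, with $K_1$ as the base case in both directions.

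For $(2) \Rightarrow (1)$, I would induct along the construction sequence. Suppose $G$ is obtained from a graph $G_0$ on vertices $v_1, \ldots, v_{n-1}$ by adding a vertex $v_n$ that is either isolated or dominating, and suppose by the inductive hypothesis that $G_0$ is realized by threshold weights $x_1, \ldots, x_{n-1}$. I keep these weights fixed and choose only the new weight $x_n$. If $v_n$ is isolated, I take $x_n < -\max_{i<n} x_i$, which forces $x_i + x_n < 0$ for every $i < n$; if $v_n$ is dominating, I take $x_n > -\min_{i<n} x_i$, which forces $x_i + x_n \geq 0$ for every $i < n$. In either case the edges among $v_1, \ldots, v_{n-1}$ are untouched, since only $x_n$ is new, so the extended weighting realizes $G$. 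The base case $K_1$ is trivial, as a single vertex has no edges and any weight realizes it.

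For $(1) \Rightarrow (2)$, the heart of the argument is the following dichotomy: in any threshold graph, either the minimum-weight vertex is isolated or the maximum-weight vertex is dominating. To prove it, I order the vertices so that $x_1 \leq x_2 \leq \cdots \leq x_n$ and suppose $v_n$ is not dominating, so there is some $j$ with $v_n v_j \notin E(G)$, i.e. $x_n + x_j < 0$. Then for every $i$, using $x_1 \leq x_j$ and $x_i \leq x_n$, I get $x_1 + x_i \leq x_j + x_n < 0$, so $v_1$ is adjacent to nothing and is therefore isolated. Granting this dichotomy, the induction is routine: delete the isolated or dominating vertex it provides; the same weights restricted to the surviving vertices realize $G$ minus that vertex as a threshold graph (the threshold condition is defined pairwise, so restriction is immediate); apply the inductive hypothesis to build the smaller graph from $K_1$; and finally re-add the deleted vertex as the last construction step, as an isolated or a dominating vertex respectively.

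I expect the dichotomy lemma to be the main obstacle; the two construction steps and the inductive bookkeeping fall out easily once it is in hand. The subtle point is that it does not suffice to inspect a single extreme vertex — the maximum-weight vertex need not be dominating, and the minimum-weight vertex need not be isolated — but at least one of the two must hold, and the short chain $x_1 + x_i \leq x_j + x_n < 0$ is exactly what couples the two extremes together. I would also note that ties in the weights cause no difficulty, since the argument uses only the inequalities $x_1 \leq x_j$ and $x_i \leq x_n$ rather than strict comparisons.
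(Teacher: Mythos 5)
Your proposal is correct in both directions: the weight choices $x_n < -\max_{i<n} x_i$ and $x_n > -\min_{i<n} x_i$ do realize the isolated and dominating extensions under the paper's convention that $v_iv_j \in E(G)$ iff $x_i + x_j \geq 0$, and your dichotomy lemma (if the maximum-weight vertex is not dominating, witnessed by $x_n + x_j < 0$, then $x_1 + x_i \leq x_j + x_n < 0$ for all $i$, so the minimum-weight vertex is isolated) is sound and is exactly the right pivot for the reverse induction. There is nothing in the paper to compare against: the authors state this theorem as an imported result of Chv\'atal and Hammer, with no proof given, so your argument stands as a self-contained proof of the cited characterization; for what it is worth, it is the standard textbook argument, and you correctly identify its one subtle point, namely that neither extreme vertex alone need have the desired property, but the chained inequality forces at least one of the two alternatives to hold.
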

In fact, \cite{MP} gives several other equivalent characterizations of
threshold graphs, but this is the one we will be interested in.
The results of Section~\ref{sec:sat-sharp}, together with this
characterization, immediately imply that all threshold graphs are
 strongly sat-sharp. Furthermore, when a construction sequence for a threshold graph $G$
is given, one can use the lemmas from Section~\ref{sec:sat-sharp} to easily
compute $\wt(G)$ by iteratively computing the weight of each intermediate
subgraph, keeping track of the previous subgraph's weight and whether
or not it had an isolated vertex.

As discussed in the introduction, several graphs whose saturation
numbers were determined in previous work fall into the class of
 strongly sat-sharp graphs.  In particular, complete graphs~\cite{EHM},
stars~\cite{KT}, generalized books~\cite{CFG08}, stars plus an edge
\cite{FFGJ-trees}, and ``nearly complete'' graphs~\cite{FG13} of the form $K_t - H$
for $H \in \{K_{1,3},K_4-K_{1,2},K_4-K_2\}$ are all threshold graphs.
Thus, all of these graphs are strongly sat-sharp, and their saturation
number is determined (up to a constant number of edges) by the
results of Section~\ref{sec:sat-sharp}.

As a non-example, we note that among the ``nearly complete'' graphs of
\cite{FG13}, the graph $K_t - 2K_2$ is not a threshold graph, and in
fact \cite{FG13} prove that
$\sat(K_t - 2K_2) = (t-\frac{5}{2})n + O(1)$, whereas
Lemma~\ref{lem:wtlower} only gives the lower bound
$\sat(K_t-2K_2, n) \geq (t - \frac{7}{2})n$.

K\' aszonyi and Tuza~\cite{KT} observed the ``irregularity'' that if
$H$ is the graph obtained from $K_4$ by adding a pendant edge, then
$\sat(H, n) \leq \frac{3}{2}n$ while $\sat(K_4, n) = 2n-3$, so that
$\sat(H,n) < \sat(K_4, n)$ for sufficiently large $n$ even though
$K_4 \subset H$. Both $K_4$ and the graph $H$ are threshold graphs; in
terms of our weight function, the irregularity can be seen to arise
from the fact that all edges of $K_4$ have weight $5$ while the
pendant edge of $H$ has weight $4$.

\section{$H$-saturated construction when $H$ is the disjoint union of cliques}\label{sec:disjoint-cliques}

Faudree, Ferrara, Gould, and Jacobson~\cite{FFGJ09} determined the
saturation numbers of generalized friendship graphs $F_{t,p,\ell}$,
consisting of $t$ copies of $K_p$ which all intersect in a common
$K_\ell$ but are otherwise pairwise disjoint. When $\ell=0$, this includes
the case of $tK_p$, consisting of $t$ disjoint copies of $K_p$. They
also determined the saturation numbers of two disjoint cliques,
$K_p \cup K_q$, when $p \neq q$, but left determining the saturation
number of three or more disjoint cliques with general orders as an
open problem. Here, we give a proof that all of these graphs are
 strongly sat-sharp, and determine their saturation numbers up to an additive
constant for all sufficiently large $n$.

\begin{proposition}\label{prop:disjoint-cliques}
  Let $2 \leq p_1 \leq \cdots \leq p_m$ be positive integers. The
  graph $H=K_{p_1} \cup \cdots \cup K_{p_m}$ is  strongly sat-sharp. In
  particular, \[(p_1-2)n-c'_H \leq \sat(H,n) \leq (p_1-2)n + c_H\] for
  some constants $c_H,c'_H$ depending only on $H$ and for all
  $n \geq \sum_{i=1}^m p_i$.
\end{proposition}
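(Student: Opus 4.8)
The plan is to prove the two displayed inequalities separately, getting the lower bound directly from Lemma~\ref{lem:wtlower} and the upper bound from a single explicit construction.

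First I would compute $\wt(H)$. Because the cliques are pairwise disjoint, every edge $uv$ of $H$ lies inside one clique $K_{p_i}$, and there both endpoints have degree $p_i-1$ with $\sizeof{N(u)\cap N(v)} = p_i-2$ and $N(v)-N(u) = \{u\}$, so $\wt(uv) = 2(p_i-2)+1 = 2p_i-3$. Minimizing over $i$ gives $\wt(H) = 2p_1-3$, hence $\frac{\wt(H)-1}{2} = p_1-2$, and Lemma~\ref{lem:wtlower} yields $\sat(H,n) \geq (p_1-2)n - c'_H$.

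For the matching upper bound I would exhibit an $H$-saturated graph. Set $N = \bigl(\sum_{i=2}^m p_i\bigr)+1$ and $M = N + (p_1-2) = \bigl(\sum_{i=1}^m p_i\bigr)-1$, and let $G = K_{p_1-2} \join (K_N \cup \overline{K_{n-M}})$: a universal clique $D$ of size $p_1-2$, a single clique $K_N$, and an independent set $I$, where $K_N$ and $I$ are each joined completely to $D$ but not to each other. Writing $V_C = D \cup V(K_N)$, the set $V_C$ induces a clique $K_M$, while every vertex of $I$ is adjacent exactly to $D$. A direct count gives $\sizeof{E(G)} = \binom{M}{2} + (p_1-2)\sizeof{I} = (p_1-2)n + c_H$ with $c_H = \binom{M}{2}-(p_1-2)M$, which depends only on $H$. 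The structural fact driving everything is that for $y \in I$ we have $N[y] = D\cup\{y\}$, so every clique meeting $I$ has at most $p_1-1 < p_1$ vertices. For $H$-freeness, this forces all $m$ cliques of any copy of $H$ into $V_C$, but they are disjoint and use $\sum p_i = M+1 > \sizeof{V_C}$ vertices, a contradiction. For saturation, a missing edge is either (a) between $y,z\in I$, or (b) between $y\in I$ and $z\in V_C\setminus D$; in both cases $D\cup\{y,z\}$ is a clique of size $p_1$ (the edges to $D$ are present since $D$ is universal), which I use as the copy of $K_{p_1}$. The remaining cliques $K_{p_2},\dots,K_{p_m}$ are then taken inside the clique $V_C\setminus D$ in case (a), and inside $V_C\setminus(D\cup\{z\})$ in the tighter case (b), where this set has exactly $M-(p_1-2)-1 = \sum_{i=2}^m p_i$ vertices, precisely enough to split into cliques of sizes $p_2,\dots,p_m$.

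The hard part is choosing the construction correctly rather than any single calculation. The naive attempt of planting the extra cliques as a disjoint union $K_{p_2}\cup\cdots\cup K_{p_m}$ (even with slack so they survive deletion of the one or two vertices absorbed into the new $K_{p_1}$) fails: a large planted clique together with the universal set $D$ can host two of the smaller target cliques at once, so $G$ already contains $H$. Consolidating all the extra cliques into the single block $K_N$ of size exactly $\sum_{i\geq 2}p_i+1$ is what makes $H$-freeness (the block $V_C$ is one vertex too small to also yield a disjoint $K_{p_1}$) and saturation (after deleting one vertex the block still has exactly $\sum_{i\geq 2}p_i$ vertices) hold at the same time. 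The remaining work is just to verify these exact vertex counts and to confirm that cases (a) and (b) exhaust all missing edges, after which the two bounds together give $(p_1-2)n - c'_H \leq \sat(H,n) \leq (p_1-2)n + c_H$, so $H$ is strongly sat-sharp.
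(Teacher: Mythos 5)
Your proposal is correct and matches the paper's proof essentially exactly: the same weight computation $\wt(H)=2p_1-3$ feeding Lemma~\ref{lem:wtlower}, and the identical construction $K_{p_1-2} \join (K_t \cup I)$ with $t = 1+\sum_{i\geq 2} p_i$, verified $H$-free and saturated by the same clique-counting and two-case missing-edge analysis. Your freeness argument via closed neighborhoods of vertices in $I$ is just a rephrasing of the paper's maximal-clique argument.
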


\begin{proof}
  First, note that $\wt(H)=2(p_1-2)+1$. So by
  Lemma~\ref{lem:wtlower}, \[\sat(H,n) \geq (p_1-2)n-c'_H\] for some
  constant $c'_H$.

On the other hand, let $G$ be the graph on $n$ vertices defined as the
join, $G=K_{p_1-2} \vee G'$ where $G'=K_{t} \cup I$, the disjoint
union of a clique on $t= 1+ \sum_{i=2}^m p_i $ vertices and a set $I$
with $n-t-p_1+2$ isolated vertices. Figure~\ref{fig:disclique-sat} shows
  the graph $G$ that is constructed for $H = H_4 \cup K_5 \cup K_6$.
\begin{figure}
  \centering
  \begin{tikzpicture}
    \begin{scope}[yshift=.5cm]
      \foreach \i in {1,...,12}
      {
        \apoint{} (v\i) at (30*\i : 1cm) {};
        \foreach \j in {1,...,\i}
        {
          \draw (v\i) -- (v\j);
        }
      }      
    \end{scope}
    \begin{scope}[yshift=-1.25cm, yscale=.15]
      \apoint{} at (-1.5cm, 0cm) {};
      \apoint{} at (-1cm, 0cm) {};
      \apoint{} at (-.5cm, 0cm) {};      
      \node at (0cm, 0cm) {$\cdots$};
      \apoint{} at (.5cm, 0cm) {};            
      \apoint{} at (1cm, 0cm) {};
      \apoint{} at (1.5cm, 0cm) {};
      \draw (0cm, 0cm) circle (1.75cm);
      \node[anchor=north] at (0cm, -1.75cm) {$I$};          
    \end{scope}
    \node[anchor=north west] at (-2cm, 2cm) {$G'$};
    \draw (-2cm, 2cm) rectangle (2cm, -2cm) {};
    \apoint{} (u) at (-4cm, 1cm) {};
    \apoint{} (v) at (-4cm, -1cm) {};
    \draw (u) -- (v);
    \foreach \i in {0,...,10}
    {
      \node[coordinate] (z\i) at (-2cm, 2cm-.4*\i cm) {};
      \draw (u) -- (z\i);
      \draw (v) -- (z\i);
    }
  \end{tikzpicture}
  \caption{Construction of the saturated graph $G$ for $H = K_4 \cup K_5 \cup K_6$.}
  \label{fig:disclique-sat}
\end{figure}
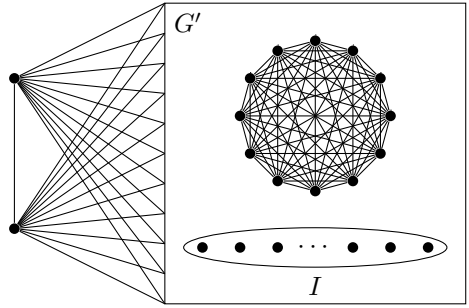

We claim that $G$ is $H$-free and $H$-saturated. To see that $G$ is
$H$-free, consider its maximal cliques. Let $Q$ denote the subgraph of
$G$ induced by the vertices of the $K_{p_1-2}$ and the $K_t$. Then $Q$
is a maximal clique with $p_1-2+t$ vertices. All other maximal cliques
of $G$ are formed from the $K_{p_1-2}$ and one vertex from
$I$. Therefore, if we were to find a copy of $H$ in $G$, then each of
the disjoint cliques of $H$ must be found in $Q$. But $Q$ only has
$|V(H)|-1$ vertices so this cannot happen.

To see that $G$ is $H$-saturated, consider the graph $G+xy$ for
  some $xy \notin E(G)$. Without loss of generality, either
$x,y \in I$ or $x \in I$ and $y \in K_t$. In either case, the vertices
of $K_{p_1-2} \cup \{x,y\}$ form a $p_1$-clique in $G+xy$, while at
least $t-1=p_2 + \cdots +p_m$ vertices of the $K_t$ remain disjoint
from this clique and can be used to embed the remaining cliques of
$H$. So $G$ is $H$-saturated.

Since $G$ has $(p_1-2)\left(n+1-\sum_{i=1}^m p_i\right)+{p_1+\cdots+p_m-1 \choose 2}$ edges, it follows that \[\sat(H,n) \leq (p_1-2)n + c_H\] for some constant $c_H$. Therefore, $H$ is  strongly sat-sharp.
\end{proof}

An immediate corollary to this proposition and the results of Section~\ref{sec:sat-sharp} is the following result.

\begin{corollary}
  Let $\ell$ and $2 \leq p_1 \leq \cdots \leq p_m$ be positive
  integers. Let $H' = K_{p_1} \cup \cdots \cup K_{p_m}$, and let
  $H=K_{\ell} \vee H'$. Then $H$ is sat-sharp. In
  particular, \[(p_1+\ell-2)n-c'_H \leq \sat(H,n) \leq (p_1+\ell-2)n + c_H\]
  for some constants $c_H,c'_H$ depending only on $H$ and for all
  $n \geq \ell+ \sum_{i=1}^m p_i$.
\end{corollary}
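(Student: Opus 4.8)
The plan is to recognize $H = K_\ell \vee H'$ as the graph obtained from $H'$ by adding $\ell$ dominating vertices in succession, and then to apply the closure result of Lemma~\ref{lem:sat-dom-smallwt} once for each vertex added. The key structural fact is that adding a single dominating vertex to $K_j \vee H'$ produces $K_{j+1} \vee H'$: the new vertex becomes adjacent to every existing vertex, so it joins the existing clique $K_j$ to form $K_{j+1}$ while remaining adjacent to all of $H'$. Iterating this $\ell$ times starting from $H'$ therefore yields $H$.

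First I would invoke Proposition~\ref{prop:disjoint-cliques}, which establishes that $H'$ is strongly sat-sharp with $\wt(H') = 2(p_1 - 2) + 1 = 2p_1 - 3$, and hence $\satlim(H') = p_1 - 2$. The crucial observation needed to drive the induction is that the hypothesis of Lemma~\ref{lem:sat-dom-smallwt} holds at every stage: because each $p_i \geq 2$, the graph $H'$ has no isolated vertices, and once at least one dominating vertex has been added every vertex is adjacent to it, so $K_j \vee H'$ has no isolated vertex for any $j \geq 1$. Consequently we remain in the ``no isolated vertices'' branch of Lemma~\ref{lem:sat-dom-smallwt} at each of the $\ell$ steps, and never need the alternative weight condition $\wt \leq k-2$ nor Lemma~\ref{lem:sat-isol-dom-bigwt}.

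The induction itself is then immediate. The base case is that $H'$ is strongly sat-sharp, and each application of Lemma~\ref{lem:sat-dom-smallwt} both preserves strong sat-sharpness and raises the weight by $2$ and the saturation limit by $1$. After all $\ell$ applications we conclude that $H$ is strongly sat-sharp with
\[ \wt(H) = (2p_1 - 3) + 2\ell = 2(p_1 + \ell) - 3 \quad\text{and}\quad \satlim(H) = (p_1 - 2) + \ell = p_1 + \ell - 2. \]
Combining the lower bound from Lemma~\ref{lem:wtlower} with the constant-error upper bound guaranteed by strong sat-sharpness then gives $(p_1 + \ell - 2)n - c'_H \leq \sat(H, n) \leq (p_1 + \ell - 2)n + c_H$; this in fact establishes the stronger statement that $H$ is strongly sat-sharp, not merely sat-sharp as claimed.

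I do not expect a genuine obstacle here, since the result is an essentially formal consequence of Proposition~\ref{prop:disjoint-cliques} and the closure lemmas of Section~\ref{sec:sat-sharp}. The only point demanding care is the bookkeeping verification that the ``no isolated vertices'' hypothesis of Lemma~\ref{lem:sat-dom-smallwt} persists through all $\ell$ rounds, together with the routine check that $K_\ell \vee H'$ genuinely decomposes into $\ell$ single-vertex dominating additions so that the closure lemma applies verbatim at every step.
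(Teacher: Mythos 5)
Your proof is correct and is essentially the paper's own argument: the paper presents this corollary as an immediate consequence of Proposition~\ref{prop:disjoint-cliques} together with the closure results of Section~\ref{sec:sat-sharp}, i.e.\ exactly your $\ell$-fold iteration of Lemma~\ref{lem:sat-dom-smallwt}, with the same bookkeeping that each intermediate graph $K_j \vee H'$ is isolate-free so the ``no isolated vertices'' branch applies throughout. Your observation that the argument in fact yields strong sat-sharpness is likewise consistent with the two-sided bound $(p_1+\ell-2)n-c'_H \leq \sat(H,n) \leq (p_1+\ell-2)n + c_H$ stated in the corollary.
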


Note that this class of graphs includes all generalized friendship
graphs $F_{t,p,\ell}$ for $p \geq l+2$. Since $F_{t,p,\ell}$ for
$p=\ell+1$ is a threshold graph, we already know from the discussion
in Section~\ref{sec:threshold} that it is  strongly sat-sharp.

While a disjoint union of cliques is not, in general, a threshold
graph, each of its components is a threshold
graph. Proposition~\ref{prop:disjoint-cliques} therefore suggests that
perhaps a disjoint union of threshold graphs is always strongly sat-sharp.
More boldly, the following conjecture appears to be plausible:
\begin{conjecture}\label{coj:disjoint-union}
  If $H_1$ and $H_2$ are (strongly) sat-sharp graphs, then their
  disjoint union $H_1 + H_2$ is (strongly) sat-sharp. That is, the
  class of (strongly) sat-sharp graphs is closed under taking disjoint
  unions.
\end{conjecture}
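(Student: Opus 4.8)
The plan is to separate the two inequalities defining (strong) sat-sharpness, with the lower bound being immediate and the upper-bound construction carrying all the content. First I would compute $\wt(H_1 + H_2)$. Since $H_1$ and $H_2$ sit in different components, no edge of $H_1+H_2$ acquires any new neighbor (common or otherwise) from the other graph, so each edge keeps its original weight and $\wt(H_1 + H_2) = \min\{\wt(H_1), \wt(H_2)\}$; this is consistent with the value $2(p_1-2)+1$ computed in Proposition~\ref{prop:disjoint-cliques}. Assuming without loss of generality that $\wt(H_1) \le \wt(H_2)$, Lemma~\ref{lem:wtlower} immediately gives $\sat(H_1+H_2, n) \ge \frac{\wt(H_1)-1}{2}n - c'$ for a constant $c'$. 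It then remains to construct, for all large $n$, an $(H_1+H_2)$-saturated graph with at most $\frac{\wt(H_1)-1}{2}n + O(1)$ edges.

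For the construction I would start from a minimum $H_1$-saturated graph $G_1$ on $n - \sizeof{V(H_2)}$ vertices, whose edge count matches the target up to $O(1)$ (resp.\ up to $o(n)$) by the strong (resp.\ plain) sat-sharpness of $H_1$, and attach a bounded gadget $W$ of constant size that contains a fixed copy of $H_2$. The design goal is that $W$ supplies the disjoint $H_2$ needed \emph{in addition} to the $H_1$ produced in the $G_1$-part: if a missing edge is added inside $G_1$, then $H_1$-saturation of $G_1$ creates a copy of $H_1$ there, and the pre-built $H_2$ in $W$ is disjoint from it, yielding the required $H_1 + H_2$. The remaining non-edges, namely those incident to $W$ or joining the two parts, would have to be handled by choosing the attachment of $W$ so that they too complete an $H_1 + H_2$, while keeping $\sizeof{E(W)}$ and the number of attaching edges bounded so the total stays $\frac{\wt(H_1)-1}{2}n + O(1)$.

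The hard part, and the reason the statement remains conjectural, is guaranteeing that the assembled graph is genuinely $(H_1+H_2)$-free while still handling every non-edge. Sat-sharpness of $H_1$ only provides an $H_1$-\emph{free}, $H_1$-saturated $G_1$, and gives no control over copies of $H_2$ sitting inside $G_1$: if $G_1$ contains $H_2$ and $W$ contains $H_1$ (which can happen when $H_1$ embeds into $H_2$), then $G$ already contains a disjoint $H_1 + H_2$ and the construction fails outright. Moreover, when $H_1$ or $H_2$ is disconnected one loses the clean ``each copy lives in a single part'' argument, so a copy could straddle $G_1$ and $W$, and an edge connecting the two parts could create unintended copies. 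Overcoming this seems to require more than black-box sat-sharpness of the two pieces.

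I would therefore try to strengthen the inductive hypothesis rather than use the optimal $H_1$-construction as a black box: prove that an optimal $H_1$-saturated graph exists with a prescribed bounded low-weight ``core'' (mirroring the set $B$ of Lemma~\ref{lem:wtlower}) into which the gadget can be attached controllably and which can be arranged to avoid realizing $H_2$. A natural template is the connected construction $K_{p_1-2} \vee (K_t \cup I)$ of Proposition~\ref{prop:disjoint-cliques}, which packs the ``expensive'' part of $H$ into a single bounded clique adjacent to a shared dominating core, ensuring the forbidden pieces can never be realized disjointly yet any added edge completes them. Generalizing this packing from cliques to arbitrary sat-sharp $H_1, H_2$ is the crux I expect to be the main obstacle.
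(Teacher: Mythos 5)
The statement you were asked to prove is Conjecture~\ref{coj:disjoint-union}, which the paper explicitly leaves open: the authors state that they have only ad-hoc constructions for small disjoint unions of particular threshold graphs and that ``it has been difficult to extract a general construction.'' There is therefore no paper proof to compare against, and your proposal --- candidly --- does not supply one either. The genuine gap is exactly the one you name: the upper-bound construction. Everything you do establish is correct: since the components are disjoint, no edge gains new common or private neighbors, so $\wt(H_1 + H_2) = \min\{\wt(H_1), \wt(H_2)\}$, and Lemma~\ref{lem:wtlower} then gives the matching lower bound $\sat(H_1+H_2,n) \geq \frac{\wt(H_1)-1}{2}n - c'$ (taking $\wt(H_1) \leq \wt(H_2)$). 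But sat-sharpness is an equality of $\satlim$ with this lower bound, so the entire content of the conjecture is the construction of an $(H_1+H_2)$-saturated graph with $\frac{\wt(H_1)-1}{2}n + o(n)$ edges, and your proposal stops at a plan for one.

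Your diagnosis of why the natural gadget approach fails is accurate and matches the apparent difficulty. Attaching a bounded $W \supset H_2$ to a minimum $H_1$-saturated $G_1$ founders because black-box saturation gives no control over copies of $H_2$ inside $G_1$ (fatal whenever $H_1$ embeds into $H_2$, and note the conjecture includes $H_1 = H_2$), because copies can straddle the two parts when $H_1$ or $H_2$ is disconnected, and because the linearly many non-edges between $W$ and $G_1$ must each complete an $H_1+H_2$ --- none of which follows from sat-sharpness of the pieces. Your suggested repair, strengthening the hypothesis to demand optimal saturated graphs with a prescribed bounded core generalizing the $K_{p_1-2} \vee (K_t \cup I)$ construction of Proposition~\ref{prop:disjoint-cliques}, is a sensible direction --- that construction is precisely how the paper handles disjoint unions of cliques, by packing all ``extra'' components into one bounded clique over a shared dominating set rather than by composing two saturated graphs --- but generalizing it beyond cliques is the unsolved crux, not a step you can take for granted. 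In short: the partial results in your proposal are sound, the obstacles you identify are real, and the proposal correctly recognizes that what remains is an open problem rather than a finishable proof.
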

Conjecture~\ref{coj:disjoint-union}, together with the other closure
properties from Section~\ref{sec:sat-sharp}, would immediately imply
Proposition~\ref{prop:disjoint-cliques}. We have found ad-hoc
constructions for some small disjoint unions of particular threshold
graphs which suggest that Conjecture~\ref{coj:disjoint-union} might
hold, but it has been difficult to extract a general construction.

\section*{Acknowledgements}

We would like to thank Ron Gould for the interesting talk on
saturation numbers that he gave at the Atlanta Lecture Series in 2018
which stimulated work on this paper.

\bibliographystyle{plain} \bibliography{bibthresh}
\end{document}